\numberwithin{equation}{section}
\newtheorem{thm}{Theorem}
\newtheorem{lemma}[thm]{Lemma}
\newtheorem{prop}[thm]{Proposition}
\newtheorem{cor}[thm]{Corollary}
\newtheorem{defi}[thm]{Definition}
\newtheorem{rem}[thm]{Remark}
\DeclareMathOperator{\E}{\mathsf{E}}
\DeclareMathOperator{\V}{\mathsf{V}}
\DeclareMathOperator{\G}{\mathsf{G}}
\DeclareMathOperator{\id}{{\rm id}}
\DeclareMathOperator{\Sfin}{S_{\rm fin}}
\DeclareMathOperator{\Vfin}{\mathsf{V}_{\rm fin}}
\DeclareMathOperator{\Gfin}{\mathsf{G}_{\rm fin}}
\DeclareMathOperator{\Efin}{\mathsf{E}_{\rm fin}}
\title{The $L^2$-strong maximum principle on arbitrary countable networks}
\author{Stefano Cardanobile\footnote{cardanobile@bccn.uni-freiburg.de, 
Bernstein Center for Computational Neuroscience,  
Hansastra{\ss}e 9A, D-79081 Freiburg, Germany
}
}
\begin{document}
\maketitle
\begin{abstract}
We study the strong maximum principle for the heat equation associated with the Dirichlet form 
on countable networks.
We start by analysing the boundedness properties of the incidence operators
on a countable network.
Subsequently, we prove that the strong maximum principle is equivalent to the underlying graph being
connected after deletion of the nodes with infinite degree.
Using this result, we prove that the number of connected components of the graph 
with respect to the heat flow equals the number of maximal invariant ideals of the
adjacency matrix.
\end{abstract}

\section{Introduction}
The study of the heat equation on networks has a long tradition 
both in the physical and mathematical~\cite{Pau36,Lum80}
literature.
Beside more concrete motivations, these investigations are of interest
in order to understand which properties of the heat equation 
on domains of $\mathbb R^d$ also hold (or fail to) in more general situations.

Although networks are simple, one-dimensional structures, 
it turns out that interesting phenomena already arise with respect to this kind of problems.
As an example, it has been proved by different authors  
that there exists non isomorphic graphs such that the Laplace operators on the corresponding networks
are isospectral \cite{Bel01,BanShaSmi06}.
This solves Kac's conjecture~\cite{Kac66} in the case of networks.

A further well-known property of the heat equation on a domain is the strong maximum principle:
if a positive initial data $u(0,\cdot)$ is localized on a subdomain $\omega \subset \Omega$, 
i.e.\ $u(0,x)=0$ for almost every $x\not\in\omega$, 
the resulting distribution is strictly positive for all $x \in \Omega$ and all $t>0$. 
In semigroup theory, this property is known under the name of \emph{irreducibility} and, if $(e^{t\Delta})_{t\geq 0}$ is the semigroup solving the heat equation in the sense of~\cite{Kat66}, then it is said to be \emph{irreducible}.

A possible approach for the analysis of the heat equation of the network is the variational one.
In this approach, a suitable Hilbert space is defined, and the Laplace operator is defined as
the operator associated with the Dirichlet form in this Hilbert space. 
Subsequently, the heat equation is solved weakly and classical solutions are obtained by regularity results.

We prove that irreducibility fails to hold for the heat equation in a Hilbert space context,
if nodes with infinite degree are present.
Further, we characterize those networks with nodes with infinite degrees for which the strong maximum
principle holds.
The heat equation on locally finite networks has been studied
by many authors, both in the $L^2$-setting~\cite{Cat99} and in the $L^\infty$-one~\cite{BelLub04}.
The maximum principle for semilinear parabolic network equations has been studied
in~\cite{Bel91}.
Nevertheless, literature on heat equations on networks that are not locally finite are relatively sparse
\cite{McG91,Car08}.
We also mention that irreducibility for topologically connected networks
in the finite case has been proved in~\cite{KraMugSik07},
and, as a matter of fact, our techniques are an extension to the infinite case 
of the techniques developed there.

The paper is organized as follows. 
In Section~\ref{sec:general} we set up a framework that allows us to deal with infinite networks with infinite degree.
In particular, we prove several properties of the incidence matrix of an infinite graph
that are needed in the definition of the domain of the Dirichlet form.

In Section~\ref{sec:infinitesymmetries} we discuss the strong maximum principle 
of the heat equation on a infinite network, 
proving that it possibly fails to hold for networks with infinite degree.
We finally show that how the notion of connectedness arising from the maximum principle
correctly generalizes the theorem relating the multiplicity of the eigenvalue 0 of the Laplace operator of a graph
and the number of connected component of the graph. 

We finally mention that the results 
are partially adapted and refined from~\cite{Car08}.


\section{General results}\label{sec:general}
Intuitively, a countable, oriented graph consists of vertexes $\mathsf v \in \mathsf V$ 
and oriented edges $\mathsf e \in \mathsf E$ that connect two different vertices.
The relations between the vertexes and edges 
are specified by a mapping $\partial: \mathsf E \to \mathsf V \times \mathsf V$ 
encoding the start and the end of each edge.
In fact, an \emph{oriented graph} is any triple $(\mathsf V,\mathsf E,\partial)$, where $\mathsf V, \mathsf E$ are sets 
and $\partial: \mathsf E \to \mathsf V \times \mathsf V$ is a mapping.

We recall some basic definitions.\
The \emph{degree} of a node $\mathsf v$ is the number of edges $\mathsf e$ such that $\mathsf v \in \partial \mathsf e$. 
The outbound and inbound degree are defined in an analogous manner.
Moreover, we define $\Gamma^+(\mathsf{v})$ the set of edges ending at $\mathsf{v}$ and $\Gamma^-(\mathsf v)$ the set of edges starting at $\mathsf v$.
The degree of $\mathsf v$ satisfies $\mathrm{deg}(\mathsf{v}) = |\Gamma^+(\mathsf{v})| + |\Gamma^-(\mathsf{v})|$ 
and the \emph{outbound star} centered at $\mathsf{v}$ is defined as the triple
$(\{\mathsf{v}\} \cup \partial_2(\Gamma^-(\mathsf{v})), \Gamma^-(\mathsf{v}), \partial_{| \Gamma^-(\mathsf{v})})$,
and it is, in fact, the subgraph induced by the edges outgoing from the vertex $\mathsf{v}$. 
The \emph{inbound star} is defined analogously, as well as the \emph{star} centred at $\mathsf v$.
Let us formulate explicitly the definition of the incidence matrices.
\begin{defi}\label{incidence}
The \emph{incoming incidence matrix} $\mathcal I^+$  is defined by
\begin{equation}\label{incinc}
\iota^+_{\mathsf{ v e}}:=\begin{cases}
1 & \mbox{if the edge $\mathsf e$ ends in the node $\mathsf v$,}\\
0 & \mbox{otherwise.}
\end{cases}
\end{equation}
The \emph{outgoing incidence matrix} $\mathcal I^-$ is defined by
\begin{equation}\label{outinc}
\iota^-_{\mathsf{ve}}:=
\begin{cases}
1 & \mbox{if the edge $\mathsf e$ starts in the node $\mathsf v$,}\\
0 & \mbox{otherwise.}
\end{cases}
\end{equation}
The \emph{incidence matrix} of the graph $\G$ is the matrix $\mathcal I=\mathcal I^+-\mathcal I^-.$
\end{defi}

We now fix a graph $\mathsf G$ and associate to each edge a copy of the interval $[0,1]$, such that, 
defining the Hilbert space
$$
L^2(\G):=\bigoplus_{\mathsf e \in \mathsf E} L^2(0,1),
$$
we provide the graph $\mathsf G$ with a measure-theoretic structure.
We consequently call $L^2(\mathsf G)$ an \emph{oriented network}.
For functions $\psi \in L^2(\mathsf G)$ we may and do write $\psi =: (\psi_{\mathsf e})_{\mathsf e \in \mathsf E}$.

\begin{rem}[Assumption on countable graphs]
Our goal is to prove some properties of the heat equation on $L^2(\mathsf G)$.
If $\mathsf G$ is not countable, then the space $L^2(\mathsf G)$ is not separable, 
and so it is possible to decompose the space in separable ideals that are invariant under the action of the heat equation.
Each of them corresponds to a countable subset of the edges set.
So, there is no loss of generality in considering only countable graphs and we assume this in the following.
\end{rem}
\begin{rem}[Assumption on trivial ideals]
We recall that an ideal of the Hilbert lattice $L^2(\mathsf G)$ is a subspace of the form $L^2(\omega)$,
where $\omega$ is a measurable set. To avoid trivial cases we always assume that $|\omega|>0$.
\end{rem}

Consider the space
$$
V_0:= \bigoplus_{\mathsf e \in \mathsf E} H^1(0,1).
$$
As a consequence of the boundedness of the trace operator on $H^1(0,1)$, both $\psi(0)$ and $\psi(1)$ are in $\ell^2(\mathsf E)$, 
and so the incidence matrix is a (possibly unbounded) operator from $\ell^2(\mathsf E)$ to $\ell^2(\mathsf V)$.
If we now define $V\subset V_0$ by
\begin{equation}\label{formdomain}
V:=\left\{ \psi \in V_0 :\exists d^\psi \in \ell^2(\V): \begin{array}{l}(\mathcal I^+)^\top d^\psi=\psi(0)\\(\mathcal I^-)^\top d^\psi=\psi(1)\end{array} \right\},
\end{equation}
then Definition~\ref{incidence} implies that all functions in $V$ are \emph{continuous on the graph}, in the sense that each $\psi_{\mathsf e}$ is continuous and if, e.g.\ $\mathsf e$  ends and $\mathsf e'$ starts in $\mathsf v$, it follows that $\psi_{\mathsf e}(1)=\psi_{\mathsf e'}(0)$.

We define the Laplace operator on a network as the operator associated with 
the Dirichlet form defined on the space $V$. 
To do this, we need to prove that $V$ is an Hilbert space and that $V$ is densely defined and continuously embedded in the space $L^2(\mathsf G)$.
Since we want to incorporate the possibility of nodes with unbounded degree, we have to clarify in an operator theoretic sense the expressions involving the incidence matrices in Equation~\eqref{formdomain}. This is the goal of the present section.

In the Equation~\eqref{formdomain} the existence of a $d^\psi$ 
with the required properties has to be understood as the existence of $d^\psi$ in the domain of the transpose of the incidence matrix, interpreted
as a operator from $\ell^2(\mathsf V)$ to $\ell^2(\mathsf E)$.
Before we turn our attention to these domains, we fix some notations.
Assume that $f:A \to H$ is a function from a set $A$ to a vector space $H$. If and $B\subset A$ is a subset, we define
$\pi_B f$ by
$$
\pi_{B}f(a)=
\begin{cases}
f(a), & a \in B,\\
0, & \mbox{otherwise.}
\end{cases}
$$
Observe that if $A$ is a measure space, $B$ is a measurable subset of $A$, and $H$ is a Hilbert space, 
then $P_B:f \mapsto \pi_B f$ is the orthogonal projection of $L^2(A)$ onto the ideal $L^2(B)$.

We start by proving that the incidence operators are densely defined. 
As a consequence, the transpose can be identified with the adjoint.

\begin{prop}\label{denseinc}
Both $\mathcal I^+$ and $\mathcal I^-$ have dense domain as operators from $\ell^2(\E)$ to $\ell^2(\V)$ for every countable graph $\mathsf G$.
\end{prop}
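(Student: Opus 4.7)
The plan is to exhibit an explicit dense subspace of $\ell^2(\E)$ that is contained in the domain of each incidence operator. The natural candidate is $c_{00}(\E)$, the space of finitely supported sequences indexed by $\E$. Since $\E$ is countable by the standing assumption, $c_{00}(\E)$ is dense in $\ell^2(\E)$ in the usual way (truncate any $\phi \in \ell^2(\E)$ to an enumeration of its support).

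The key step is then to check that $c_{00}(\E)$ lies in the domains of $\mathcal I^+$ and $\mathcal I^-$. Unfolding the definitions, the action of $\mathcal I^+$ on $\phi \in \ell^2(\E)$ is formally
\begin{equation*}
(\mathcal I^+ \phi)_{\mv} \;=\; \sum_{\me \in \E} \iota^+_{\mv\me}\,\phi_{\me} \;=\; \sum_{\me \in \Gamma^+(\mv)} \phi_{\me},
\end{equation*}
and analogously for $\mathcal I^-$ with $\Gamma^-(\mv)$. If $\phi$ has finite support $F \subset \E$, then $(\mathcal I^+ \phi)_{\mv}$ vanishes whenever $\mv \notin \partial_1(F) \cup \partial_2(F)$, a finite subset of $\V$. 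Hence $\mathcal I^+\phi$ is a finitely supported sequence on $\V$ and in particular lies in $\ell^2(\V)$; similarly for $\mathcal I^-$. This places $c_{00}(\E)$ inside both domains.

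Combining these two observations gives a dense subspace inside $\mathcal{D}(\mathcal I^+) \cap \mathcal{D}(\mathcal I^-)$, proving the proposition. I don't foresee a real obstacle here: the only point where one might expect trouble is the presence of vertices of infinite degree (which is the whole reason the incidence operators are merely densely defined and not bounded), but this pathology is invisible on finitely supported test sequences, precisely because a finite set of edges can only touch finitely many vertices regardless of their individual degrees.
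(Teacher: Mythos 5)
Your proof is correct, and it takes a genuinely different and much more elementary route than the paper's. The paper decomposes the graph into a locally finite part and a countable union of infinite stars, proves density separately in each piece (via bounded-degree truncations $\pi_{\E_n}y$ for the locally finite part, and via the inclusion $\ell^1 \subset D(\mathcal I^+)$ for a single infinite star), and then reassembles the approximants with an $\varepsilon + 2^{-n}$ estimate. Your observation that $c_{00}(\E)$ already sits inside the maximal domain --- because a finite set of edges is incident to only finitely many vertices, so $\mathcal I^\pm \phi$ is itself finitely supported --- collapses all of this into two lines and makes the independence from local finiteness transparent. The closing point you flag is exactly the right one: infinite degree is a pathology of the adjoint $(\mathcal I^+)^\top$ acting on vertex sequences (as the paper's subsequent remark shows, $(\mathcal I^+)^\top$ is densely defined if and only if the graph is locally finite, since $\mathsf 1_{\mv} \mapsto \mathsf 1_{\Gamma^+(\mv)}$ leaves $\ell^2$ when $\deg^+(\mv)=\infty$), not of the forward operator on finitely supported edge sequences. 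The only thing the paper's longer argument buys is incidental information about larger pieces of the domain (e.g.\ that $\ell^1$ of an infinite star is admissible, and that the natural subgraph projections of an arbitrary $y$ are admissible approximants); none of that is needed for the density statement itself.
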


\begin{proof}
The idea of the proof is the following: we prove the claim for locally finite graphs and for infinite stars; 
we conclude combining both results.

Assume that the graph $\mathsf G$ is locally finite, i.e.\ that each node has finite degree. 
Then,
$$
\G= \bigcup_{n \in \mathbb N} \G_n,
$$
where $\G_n$ is the subgraph induced by the subset $\V_n$ of nodes having degree less than $n$.

For all $y \in \ell^2(\E)$
$$
\lim_{n \to \infty} \| y- \pi_{\E_n}(y)\|_{\ell^2(\E)}=0.
$$
We denote by $\E_n$ the set of the edges belonging to $\G_n$. The estimate
\begin{eqnarray*}
\|\mathcal I^+ \pi_{\E_n}y\|^2_{\ell^2(\V)}& = &\sum_{\mathsf{v} \in \V_n }|  \sum_{\mathsf{e} \in \Gamma^+(\mathsf{v})} \pi_{\E_n}y_{\mathsf{e}} |^2 \\
&\leq & M_n \sum_{\mathsf{v} \in \V_n}  \sum_{\mathsf{e} \in \Gamma^+(\mathsf{v})} |\pi_{\E_n}y_{\mathsf{e}} |^2  \\
& = & M_n\|\pi_{\E_n}y\|^2_{\ell^2(\E)}
\end{eqnarray*}
yields that $\pi_{\E_n}y\in D(\mathcal I^+)$ for all $n$, and so the claim is proved for a locally finite graph.

\smallskip
If the graph consists of a single inbound infinite star $S$, then $\ell^1(\E) \subset D(\mathcal I^+)$, 
and so $D(\mathcal I^+)$ is dense in $\ell^2(\E)$.

\smallskip
To complete the proof, we assume without loss of generality 
that the nodes with infinite out-degree are labeled $\mathsf{v}_k$, $k \in \mathbb N$, 
and we decompose the graph as
\begin{equation}\label{hilfdenseinc1}
\G=\bigcup_{k \in \mathbb N} \Gamma(\mathsf v_k) \cup \Gfin.
\end{equation}
Here $\Gfin$ represents the subgraph induced by the node with finite degree, 
and $\Vfin$, $\Efin$ are the corresponding vertex and edges subsets, respectively.
We define for all $n \in \mathbb N$ the approximations 
$$
\G_n = \bigcup_{k \leq n-1} \Gamma(\mathsf{v}_k) \cup \Gfin.
$$
We fix an arbitrary $x \in \ell^2(\E)$ and define 
$$
v_0:= \pi_{\Efin}x , \quad v_k:=\pi_{\Gamma(\mathsf v_k)}x,\qquad k \in \mathbb N,
$$
where we have identified $\Gamma(\mathsf v_k)$ with its edge set.
Since $\Gfin$ is locally finite, there exists a sequence 
$(v^n_0)_{n \in \mathbb N} \in D(\mathcal I^+_{|\Efin})$
such that the estimate
$$\|v^n_0 - v_0\| \leq \frac{1}{2^{n}}, \qquad n \in \mathbb N$$
holds.
In particular, extending $v_0^n$ by 0 yields a sequence in $D(\mathcal I^+)$, since $\mathcal I^+ \ell^2(\Efin) \subset \ell^2(\Vfin$).
We recall that the domain of the incidence operators is dense for infinite stars, too. 
So, for all $k \geq 1$  there exists a sequence $(v^n_k)_{n \in \mathbb N} \in D(\mathcal I^+_{|\Gamma)\mathsf v_k)})$ such that 
$$\|v^n_k - v_k\| \leq \frac{1}{2^{n+k}}, \qquad k\geq 1, n \in \mathbb N.$$
Again by the same arguments as for finite part, extending $v_k$ by 0 yields a vector $D(\mathcal I^+)$.
With an abuse of notation, we denote the extensions of $v_k^n$ $k,n\in \mathbb N$ also by $v_k^n$.

\smallskip
Summing up, for all $k \in \mathbb N$ there is a sequence $(v^n_k)_{n \in \mathbb N} \in D(\mathcal I^+)$ such that 
$$\|v^n_k - v_k\| \leq \frac{1}{2^{n+k}}, \qquad k,n \in \mathbb N.$$
We define $x^n:=\sum_{k \leq n} v_k$ and fix $\varepsilon <0$.
Since~\eqref{hilfdenseinc1} holds, there exists $n_0 \in \mathbb N$, $\|x-\pi_{\mathsf E_n}x\| < {\varepsilon}$ for all $n \geq n_0$. 
For such an $n$ we estimate
\begin{eqnarray*}
\|x-x^n\| & = & \|x-\pi_{\mathsf E_n} (x) + \pi_{\mathsf E_n}(x) - x^n\| \\
& \leq&  \|x-\pi_{\mathsf E_n} (x)\|+\|\pi_{\mathsf E_n}(x) - x^n\| \\
& <& {\varepsilon} + \frac{1}{2^n},
\end{eqnarray*}
and so $\lim_{n \to \infty }x^n = x$. 

\smallskip
Since now $x^n$ is a finite linear combination of elements in the domain,
we have that $x^n \in D(\mathcal I^+)$, thus concluding the proof.
\end{proof}
\begin{rem}[Domain of the adjoint]
One could ask whether the adjoints of the incidence operators $(\mathcal I^+)^\top,(\mathcal I^-)^\top$ 
are themselves densely defined.
In fact, this is the case if and only if the graph $\G$ is locally finite.
To see this, assume first that $\G$ is locally finite and fix a vector $x \in c_{00}(\V)$. 
Then $(\mathcal I^+)^\top \in c_{00}(\E) \subset \ell^2(\E)$, too.
This implies $x \in D((\mathcal I^+)^\top)$ and so the operator is densely defined. 
Conversely, if $(\mathcal I^+)^\top $ is densely defined, then $\mathsf 1_{\mathsf{v}}$ has to be in the domain for all $\mathsf{v} \in \V$. 
Observe that $(\mathcal I^+)^\top \left( \mathsf 1_{\mathsf{v}} \right)=\mathsf 1_{\Gamma^+(\mathsf{v})}$ which is in $\ell^2$ if any only if $\Gamma^+(\mathsf{v})$ is finite.
As a side remark, observe that this implies that $V$ is not dense in $\oplus_{\mathsf{e} \in \E} H^1(0,1)$ in the $H^1$-norm. 
However, it is not difficult to prove that both $V$ and $\oplus_{\mathsf{e} \in \E} H^1(0,1)$ are dense in $\oplus_{\mathsf{e} \in  \E} L^2(0,1)$ with respect to the $L^2$-norm.
\end{rem}
The issue whether an infinite matrix defines a (bounded) operator in a Hilbert space is known 
at least since Halmos~\cite{Hal82} to have no ``{elegant and useful answer}''.
In the context of graphs, Mohar~\cite{Moh82} has investigated the boundedness of 
the adjacency matrix, proving that boundedness is equivalent to the graph being 
uniformly locally finite (in short \emph{ULF}). 
In the next result we investigate the boundedness of the incidence matrices.
\begin{prop}\label{incidencebound}
Consider a countable graph $\mathsf G$. Then:
\begin{enumerate}[a)]
\item The incidence matrices $\mathcal I^+, \mathcal I^-$ are bounded operators from $\ell^2(\E)$ to $\ell^2(\V)$ if and only if the graph $\G$ is uniformly locally finite.
\item The incidence matrices $\mathcal I^+, \mathcal I^-$ are bounded operators from $\ell^\infty(\E)$ to $\ell^\infty(\V)$ if and only if the graph $\G$ is uniformly locally finite.
\item The operators $\mathcal I^+, \mathcal I^- $ are contractive from $\ell^1(\E)$ to $\ell^\infty(\V)$.
\end{enumerate}
\end{prop}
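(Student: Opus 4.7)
The plan is to treat the three statements essentially independently. In each case the sufficiency is a short direct estimate, while the necessity of bounded degree in (1) and (2) is obtained by testing $\mathcal I^\pm$ against vectors concentrated on the edges incident to vertices of increasingly large degree.

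For (1), to prove sufficiency I would simply reuse the Cauchy--Schwarz computation already carried out in the proof of Proposition~\ref{denseinc}: writing $\|\mathcal I^+ y\|_{\ell^2(\V)}^2 = \sum_{\mv \in \V} \bigl|\sum_{\me \in \Gamma^+(\mv)} y_\me \bigr|^2$ and estimating the inner square by $|\Gamma^+(\mv)|\sum_{\me \in \Gamma^+(\mv)} |y_\me|^2$, then using that the sets $\Gamma^+(\mv)$ partition $\E$, gives $\|\mathcal I^+ y\|_{\ell^2(\V)}^2 \leq M \|y\|_{\ell^2(\E)}^2$ with $M = \sup_\mv \deg(\mv)$; the same bound works for $\mathcal I^-$. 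For necessity I would argue by contraposition: unbounded degree produces vertices $\mv_n$ with either $|\Gamma^+(\mv_n)| \geq n$ or $|\Gamma^-(\mv_n)| \geq n$. In the first case I set $y^n := n^{-1/2} \mathbf 1_{F_n}$ for some $n$-element subset $F_n \subset \Gamma^+(\mv_n)$; then $\|y^n\|_{\ell^2(\E)} = 1$ while $(\mathcal I^+ y^n)_{\mv_n} = \sqrt n$, so $\mathcal I^+$ is unbounded, and the second case works symmetrically against $\mathcal I^-$.

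For (2), sufficiency is the one-line estimate $|(\mathcal I^+ y)_\mv| \leq |\Gamma^+(\mv)|\,\|y\|_{\ell^\infty(\E)} \leq M\,\|y\|_{\ell^\infty(\E)}$, and likewise for $\mathcal I^-$. For necessity the very same test vectors $\mathbf 1_{F_n}$ used in (1) have $\ell^\infty$ norm $1$ and are mapped to a vector whose $\mv_n$-coordinate equals $n$; in the extreme case where some $|\Gamma^+(\mv)|$ is actually infinite, $\mathcal I^+ \mathbf 1_{\Gamma^+(\mv)}$ does not even lie in $\ell^\infty(\V)$, so $\mathcal I^+$ is not everywhere defined. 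Statement (3) is immediate from the triangle inequality together with the disjointness of the family $\{\Gamma^+(\mv)\}_{\mv \in \V}$: for every $\mv$ one has $|(\mathcal I^+ y)_\mv| \leq \sum_{\me \in \Gamma^+(\mv)} |y_\me| \leq \|y\|_{\ell^1(\E)}$, and taking the supremum on the left yields contractivity.

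No step in this outline is genuinely deep; the only care required is the bookkeeping between $\mathcal I^+$ and $\mathcal I^-$ at vertices with asymmetric in- and out-degree, handled transparently by allowing either of the two alternatives when choosing the test vertices $\mv_n$. I expect the writeup to be essentially a short sequence of explicit estimates, with the main temptation to avoid being the urge to re-prove (1) and (2) in a unified way rather than separately.
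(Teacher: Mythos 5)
Your proposal is correct and follows essentially the same route as the paper: Cauchy--Schwarz over the stars $\Gamma^\pm(\mv)$ for the $\ell^2$ sufficiency, normalized indicator vectors of large stars for necessity, row-sum bounds for the $\ell^\infty$ case, and the triangle inequality plus disjointness of the $\Gamma^+(\mv)$ for contractivity on $\ell^1$. Your use of finite $n$-element subsets $F_n$ even streamlines the paper's argument slightly, since it subsumes the infinite-degree case that the paper treats separately, and your explicit either/or bookkeeping between $\deg^+$ and $\deg^-$ is a point the paper glosses over.
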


\begin{proof}
We start proving \emph{a)}. We assume that the graph $\G$ is \emph{ULF} with 
maximal degree $D$, 
fix $x \in \ell^2(\E)$ and compute, again using the same symbol for a star and its edges set
\begin{eqnarray*}
\|\mathcal I^+x\|^2_{\ell^2(\V)}&=& \sum_{\mathsf{v} \in \V}  |\sum_{\mathsf{e} \in \Gamma^+(\mathsf{v})}  x_{\mathsf{e}}|^2\\
&\leq& \sum_{\mathsf{v} \in \V}  \|(x_{\mathsf{e}})_{\mathsf{e} \in \Gamma^+(\mathsf{v})}\|_{\ell^1(\Gamma^+(\mathsf{v}))}^2\\
&\leq& \sum_{\mathsf{v} \in \V} \deg^+(\mathsf{v}) \|(x_{\mathsf{e}})_{\mathsf{e} \in \Gamma^+(\mathsf{v})}\|_{\ell^2(\Gamma^+(\mathsf{v}))}^2\\
&\leq& D\|x\|^2_{\ell^2(\E)}.
\end{eqnarray*}
Alternatively, if the graph is locally finite, but it is not \emph{ULF}, 
then there exists a sequence of nodes $(\mathsf{v}_\ell)_{\ell \in \mathbb N}$ such that $\lim_{\ell \to \infty}\deg(\mathsf{v}_\ell)=\infty$. 
Consider the vectors $x_{\ell}:= \frac{1}{\sqrt{\deg^+(\mathsf{v}_\ell)}}\mathsf 1_{I(\mathsf{v}_\ell)}.$ Then $\|x_{\ell}\|_{\ell^2(\E)}=1$, but $\|\mathcal I^+x_{\ell}\|_{\ell^2(\V)}=\deg^+(\mathsf{v}_\ell).$ 
This shows that $\mathcal I^+$ is not bounded.

If, finally, there exists a node $\mathsf{v}$ such that $\deg^+(\mathsf{v})=\infty$,
it suffices to observe that for the inbound star $\Gamma^+(\mathsf{v})$
all vectors $0 \leq x \in \ell^2(\Gamma^+(\mathsf{v}))\setminus \ell^1(\Gamma^+(\mathsf{v}))$ are not in the domain of $\mathcal I^+_{|\Gamma^+(\mathsf{v})}.$
Extending one of these vectors by 0 yields the claim.

\smallskip
To see that \emph{b)} holds we observe that the operator $\mathcal I^+$ is a positive matrix. 
Thus, it is sufficient to compute $\mathcal I^+ \mathsf 1_{\mathsf{e}}
= (\deg^+ (\mathsf{v}))_{\mathsf{v} \in \V}$.

\smallskip
Again, since $\mathcal I^+$ is a positive matrix to see that \emph{c)} holds, we compute for arbitrary $x \in \ell^1$
$$
\|\mathcal I^+x\|_{\ell^\infty(\mathsf V)}
\leq \|\mathcal I^+x\|_{\ell^1(\mathsf V)} 
= \|x\|_{\ell^1(\mathsf E)}.
$$
\end{proof}
In this section we have established some fundamental properties of the incidence matrices appearing in the definition of the space $V$.
In the next section, we will prove some irreducibility properties for the Laplace operator on $L^2(\mathsf G)$.

\section{Irreducibility for the heat semigroup}\label{sec:infinitesymmetries}
We introduce the \emph{Laplace operator on a network} as the operator associated with the symmetric, bilinear form
$$
a(f,g):=\int_\mathsf G f'(x) \overline{g'(x)} dx
$$
with form domain $D(a):=V$ . 
We call the semigroup $(e^{t\Delta})_{t\geq 0}$ on the Hilbert space $L^2(\mathsf G)$ 
generated by the Laplace operator the \emph{heat semigroup}.
Due to a Theorem of Ouhabaz~\cite{Ouh05} it is possible to characterize the invariance
under the action of $(e^{t\Delta})_{t\geq 0}$
of closed, convex subsets $C$ of $L^2(\mathsf G)$, i.e.\ the property
$$
f\in C \Rightarrow [\forall t\geq 0 : e^{t\Delta}f \in C].
$$
The aforementioned Theorem takes a particularly simple 
form if $C$ is a linear subspace.
Since we will use this simplified version several times, we formulate it explicitly
for the sake of readability.

\begin{thm}[Invariance of linear subspaces]\label{ortho}
Consider a densely defined, continuous, elliptic sesquilinear form $(a,V)$ on the Hilbert space $H$ and fix a closed linear subspace $Y \subset H$.
Then $Y$ is invariant under the semigroup generated by $a$ if and only if $P_Y V \subset V$ and 
$$
a(f,g)=0, \qquad f \in V\cap Y, g \in V \cap Y^\perp.
$$
\end{thm}

In particular, the theorem can be used to characterize irreducibility, if the underlying Hilbert space is of the form $L^2(\Omega)$.
We recall that a semigroup is irreducible if and only if whenever $L^2(\omega)$ is invariant under the action of the semigroup,
it follows that either $|\Omega \setminus \omega|=0$ or $|\omega|=0$.
In the context of graphs, Theorem~\ref{ortho} implies that irreducibility is equivalent to the invariance of continuity under the projection on $L^2(\omega)$.
This observation has been used in~\cite{KraMugSik07} to prove that irreducibility is equivalent to the graph being connected, in the case of a finite graph.
The same arguments of~\cite{KraMugSik07} fail to hold for infinite graphs, and, 
as a matter of fact, the equivalence does not hold, as proved in~\cite{Car08}.

The key observation is that on nodes with infinite degree Dirichlet boundary conditions are automatically imposed, 
and so an initial data localised on a side of such nodes cannot propagate to the other side. 
Thus, irreducibility in connected, infinite graphs is equivalent to the graph being connected after deletion of nodes with infinite degree.
We start proving the result concerning the boundary conditions imposed nodes with infinite degree.

\begin{lemma}\label{hilflemma3}
For all countable graphs with vertex set $\mathsf V$ the and all $\mathsf{v} \in \V$ the following assertions are equivalent.
\begin{enumerate}[a)]
\item $\deg(\mathsf{v})<\infty$
\item $\exists \psi \in V: \pi_{\mathsf{v}}(d^\psi)\neq 0$
\end{enumerate}
\end{lemma}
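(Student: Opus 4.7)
The plan is to prove the two implications by directly unpacking the definitions and invoking the computation already recorded in the remark after Proposition~\ref{denseinc}. The crucial observation I will lean on is that $(\mathcal I^+)^\top \mathsf 1_{\mv} = \mathsf 1_{\Gamma^+(\mv)}$ and $(\mathcal I^-)^\top \mathsf 1_{\mv} = \mathsf 1_{\Gamma^-(\mv)}$, which quantifies exactly when a vector supported at a single node lies in the domain of the transposed incidence matrices.

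For the direction $a)\Rightarrow b)$, I would construct an explicit witness. Assuming $\deg(\mv)<\infty$, both $\Gamma^+(\mv)$ and $\Gamma^-(\mv)$ are finite, so $\mathsf 1_{\mv}\in D((\mathcal I^+)^\top)\cap D((\mathcal I^-)^\top)$. Define $\psi\in V_0$ by setting $\psi_{\me}$ to be the linear function on $[0,1]$ taking the value $1$ at the endpoint identified with $\mv$ and $0$ at the opposite endpoint whenever $\me\in\Gamma^+(\mv)\cup\Gamma^-(\mv)$, and $\psi_{\me}\equiv 0$ on all other edges. Only finitely many edges carry a nonzero component, so $\psi\in V_0$ is automatic; the traces $\psi(0),\psi(1)\in\ell^2(\E)$ coincide with $(\mathcal I^+)^\top\mathsf 1_{\mv}$ and $(\mathcal I^-)^\top\mathsf 1_{\mv}$ respectively (up to the sign/convention matching the definition of $V$). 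Hence $d^\psi=\mathsf 1_{\mv}$ satisfies the defining constraints of $V$ and $\pi_{\mv}(d^\psi)=1\neq 0$.

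For the converse $b)\Rightarrow a)$, assume $\psi\in V$ with $\pi_{\mv}(d^\psi)\neq 0$. By definition of $V$, $d^\psi\in\ell^2(\V)$ must satisfy $(\mathcal I^+)^\top d^\psi\in\ell^2(\E)$ and $(\mathcal I^-)^\top d^\psi\in\ell^2(\E)$ (since both equal traces of a $V_0$-function). Now for any $x\in\ell^2(\V)$ one has the identity
\[
\|(\mathcal I^+)^\top x\|^2_{\ell^2(\E)}=\sum_{\me\in\E}|x_{\text{end}(\me)}|^2=\sum_{\mv'\in\V}\deg^+(\mv')\,|x_{\mv'}|^2,
\]
and analogously for $(\mathcal I^-)^\top$. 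Applied to $x=d^\psi$, the finiteness of the left-hand side together with $d^\psi_{\mv}\neq 0$ forces $\deg^+(\mv)<\infty$, and the same argument with $\mathcal I^-$ gives $\deg^-(\mv)<\infty$; hence $\deg(\mv)<\infty$.

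I do not expect any real obstacle here: the proof is a bookkeeping exercise on top of the domain description already established. The only item requiring slight care is verifying that the explicit $\psi$ constructed in the first direction really does sit in $V$ with $d^\psi=\mathsf 1_{\mv}$, which is immediate once one matches the boundary-value convention encoded in \eqref{formdomain}.
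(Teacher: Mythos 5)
Your proof is correct and follows essentially the same route as the paper's: the witness for a)\,$\Rightarrow$\,b) is the same affine interpolation supported on the finitely many edges incident to $\mv$, and your b)\,$\Rightarrow$\,a) is the same counting argument ($\deg(\mv)$ edges each contributing $|d^\psi_{\mv}|^2$ to a finite quantity), merely packaged through the identity $\|(\mathcal I^{\pm})^\top d^\psi\|^2_{\ell^2(\E)}=\sum_{\mv'}\deg^{\pm}(\mv')\,|d^\psi_{\mv'}|^2$ and the $\ell^2$-boundedness of the traces, where the paper instead sums the Sobolev bound $\|\psi_{\me}\|_{H^1}\geq M\|\psi_{\me}\|_{\infty}\geq M|d^\psi_{\mv}|$ over $\Gamma(\mv)$. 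Both hinge on the same trace/embedding fact and there is no gap.
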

\begin{proof}
Recall that $H^1(0,1) \hookrightarrow C[0,1]$ and so $\|\psi\|_{H^1(0,1)} \geq M \|\psi\|_\infty$ and
fix an arbitrary node $\mathsf{v} \in \V$.

We first prove $b) \Rightarrow a)$. If there exists $\psi \in V$, $\psi(\mathsf{v})\neq0$, then
$$
\|\psi\|^2_V = \sum_{\mathsf{e} \in \E} \|\psi_{\mathsf{e}}\|^2_{V_{\mathsf{e}}} \geq  M \sum_{\mathsf{e} \in \Gamma(\mathsf{v})} \|\psi_{\mathsf{e}}\|^2_\infty \geq M \sum_{\mathsf{e} \in \Gamma(\mathsf{v})} |\psi(\mathsf{v})|^2,
$$
and so $\Gamma(\mathsf{v})$ has to be finite.

Conversely, choose $\mathbb C \ni \lambda \neq 0$ and set
$$
\psi(\mathsf{v}')=
\begin{cases}
\lambda, & \mathsf{v}' =\mathsf{v},\\
0, & \mbox{otherwise}.
\end{cases}
$$
For all $x \in \G \setminus \V$ interpolate $\psi$ by affine functions. Then $\psi_{\mathsf{e}} \in H^1(0,1)$ for all $\mathsf{e} \in \E$ and moreover
$$
\|\psi\|^2_{L^2}= \deg(\mathsf{v}) \frac{|\lambda^2|}{3},\quad \|\psi\|^2_{H^1}= \deg(\mathsf{v}) \frac{4 |\lambda|^2}{3}.
$$
Finally, $\psi$ is continuous in the nodes. So, $\psi \in V$ and this completes the proof.
\end{proof}

For a subset of nodes $\V'$ we call the \emph{subgraph induced by $\V'$} the subgraph of $\G$ containing all edges that are only incident to nodes of $\V'$.
The \emph{boundary} of $\G'$ consists of the nodes of $\G'$ that are adjacent to nodes of $\G\setminus \G'$.
\begin{prop}\label{infiniteinvariance}
Consider the heat semigroup $(e^{t\Delta})_{t\geq 0}$ on a network. 
For all subgraphs $\G'\subset \G$ induced by a set of nodes $\V'$ 
the following assertions are equivalent.
\begin{enumerate}[a)]
\item The ideal $L^2(\G')$ is invariant under the action of the semigroup $(e^{t\Delta})_{t\geq 0}$.
\item If $\mathsf{v} \in \partial \G'$, then $ \deg(\mathsf{v}) = \infty$.
\end{enumerate}
\end{prop}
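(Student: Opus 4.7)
The plan is to apply Ouhabaz's Theorem~\ref{ortho} with the closed subspace $Y = L^2(\G') = \bigoplus_{\me \in \E'} L^2(0,1)$, where $\E'$ is the edge set of $\G'$. Its orthogonal complement is $L^2(\G\setminus\G') = \bigoplus_{\me \in \E\setminus\E'} L^2(0,1)$. Any $f \in V \cap L^2(\G')$ and $g \in V \cap L^2(\G\setminus\G')$ have disjoint support, so the integrand of $a(f,g)=\int_{\G} f'\overline{g'}\,dx$ vanishes pointwise and the bilinear-form hypothesis is automatic. Thus invariance of $L^2(\G')$ collapses to the single requirement $P_{L^2(\G')}V\subset V$.

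Next I unpack what $\pi_{\E'}\psi\in V$ means for $\psi\in V$. The components are still in $H^1(0,1)$ (the $\E\setminus\E'$-components being zero), so the only issue is the existence of a trace $d\in\ell^2(\V)$ with $(\mathcal I^+)^\top d=(\pi_{\E'}\psi)(0)$ and $(\mathcal I^-)^\top d=(\pi_{\E'}\psi)(1)$, i.e.\ continuity of $\pi_{\E'}\psi$ at every vertex. I would check this vertex by vertex. At $\mv\notin\V'$ no edge in $\E'$ is incident, so the value $0$ works; at $\mv\in\V'\setminus\partial\G'$ every incident edge lies in $\E'$ and $\pi_{\E'}\psi$ inherits the value $d^\psi_{\mv}$ there. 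The only delicate case is $\mv\in\partial\G'$: some incident edges lie in $\E'$ (carrying the value $d^\psi_{\mv}$) and some lie outside $\E'$ (now carrying the value $0$). Hence $\pi_{\E'}\psi\in V$ if and only if $d^\psi_{\mv}=0$ at every boundary vertex.

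Given this reduction, both implications become immediate from Lemma~\ref{hilflemma3}. For b)$\Rightarrow$a), if every $\mv\in\partial\G'$ has infinite degree, then Lemma~\ref{hilflemma3} forces $d^\psi_{\mv}=0$ there for every $\psi\in V$; taking $d^{\pi_{\E'}\psi}_{\mv}$ to equal $d^\psi_{\mv}$ on $\V'\setminus\partial\G'$ and $0$ elsewhere gives a vector in $\ell^2(\V)$ witnessing $\pi_{\E'}\psi\in V$, so Ouhabaz's criterion applies. For a)$\Rightarrow$b) I argue by contraposition: if some $\mv^\ast\in\partial\G'$ has $\deg(\mv^\ast)<\infty$, Lemma~\ref{hilflemma3} supplies $\psi\in V$ with $d^\psi_{\mv^\ast}\neq 0$, and since $\mv^\ast$ is a boundary vertex the sets $\Gamma(\mv^\ast)\cap\E'$ and $\Gamma(\mv^\ast)\setminus\E'$ are both nonempty, so no single value of $d$ at $\mv^\ast$ reconciles the traces of $\pi_{\E'}\psi$; hence $P_{L^2(\G')}V\not\subset V$ and $L^2(\G')$ is not invariant.

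The only mildly technical point is the precise translation of the trace conditions in \eqref{formdomain} into the vertexwise continuity statement above; everything else is bookkeeping once Lemma~\ref{hilflemma3} is in hand, and the orthogonality condition in Ouhabaz's theorem costs nothing because $Y$ and $Y^\perp$ live on disjoint unions of edges.
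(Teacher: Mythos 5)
Your proposal is correct and follows essentially the same route as the paper: both apply Ouhabaz's Theorem, dismiss the form condition because $f$ and $g$ live on disjoint edge sets, reduce everything to $P_{L^2(\G')}V\subset V$ checked vertex by vertex, and invoke Lemma~\ref{hilflemma3} in each direction (your contrapositive "no single value of $d$ at $\mv^\ast$ reconciles the traces" is the paper's limit argument at a boundary node in different words). No substantive difference.
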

\begin{proof}
We preliminarily observe that the orthogonal projection $P$ onto $L^2(\G')$ acts on a function $\psi$ by
$$
P\psi(x)=
\begin{cases}
\psi(x), & x \in \G', \\
0, & x \in \G \setminus \G'.
\end{cases}
$$
To see that \emph{b)} implies \emph{a)}, we have to prove that the conditions in Theorem \ref{ortho} hold. 
The second condition is clear, since $P \psi$ and $(\id-P) \psi$ have disjoint support.
So, we only prove that $P V \subset V$, i.e., that the continuity in the nodes is preserved under the action of $P$. 

On all internal nodes of $\G'$, $P\psi$ is continuous since the projection acts as the identity in a full neighbourhood of the node,
and on all internal node of $\G \setminus \G'$ $P\psi$ is continuous since $P\psi\equiv0$ in a full neighbourhood of the node.

It remains to prove continuity in the nodes on the boundary of $\G'$. 
We arbitrarily choose a node $\mathsf{v} \in \partial \G'$ and consider the star centred in $\mathsf{v}$ $\Gamma(\mathsf{v})$.
On each $\mathsf{e} \in \Gamma(\mathsf{v}) \cap \G\setminus\G'$, $\psi_{\mathsf{e}}\equiv 0$ and so $\psi_{\mathsf{e}}(\mathsf{v})=0$.
On the other side,$\psi(\mathsf{v})=0$ since $\deg(\mathsf{v})=\infty$, and so for each $\mathsf{e} \in \Gamma(\mathsf{v}) \cap \G'$ $\psi_{\mathsf{e}}(\mathsf{v})=0$.
As a consequence, defining 
$$
d^{P\psi}=
\begin{cases}
d^\psi_{\mathsf{v}} & \mathsf{v} \in V',\\
0 & \mbox{otherwise},
\end{cases}
$$
proves the continuity of $P\psi$.

To prove the converse implication, observe that the boundary space $\partial V \subset \ell^2(V)$ satisfies
\begin{equation}\label{eq:partialV}
\partial V:=\{ d^\psi: \psi \in V\} \subset \{(x_{\mathsf{v}})_{\mathsf{v} \in \V} \in \ell^2(\V): [\deg(\mathsf{v}')=  \infty \Rightarrow x_{\mathsf{v}'}=0]\}.
\end{equation}
Assume that $L^2(\G')$ is invariant. By Theorem \ref{ortho} $P\psi$ is continuous in the nodes whenever $\psi$ is continuous in the nodes.
In particular, $P \psi$ has to be continuous in all nodes $\mathsf{v} \in \G\setminus\G'$ that are adjacent to $\G'$
and for these nodes $P\psi(\mathsf{v})=0$. So, we arbitrarily choose a neighbourhood $N$ of $\mathsf{v}$ and $\psi \in V$.
On each point $x \in (N \cap \G') \setminus \{\mathsf{v}\}=:N'$ the projection $P$ acts as the identity, i.e $P\psi(x)=\psi(x)$. 
Further, the ideal $I$ is invariant and so $P\psi$ is a continuous function. We compute
$$
0=\lim_{N' \ni x \to \mathsf{v}}P\psi(x)= \lim_{N' \ni x \to \mathsf{v}} \psi(x)= \psi(\mathsf{v}).
$$
Since the choice of $\psi$ is arbitrary, $\deg(\mathsf{v})=\infty$ follows from Lemma~\ref{hilflemma3}.
\end{proof}

Proposition \ref{infiniteinvariance} allows us to characterize the irreducibility of $(e^{t\Delta})_{t\geq0}$
in terms of the connectedness of the graph $\mathsf G$. 
For finite graphs, irreducibility is known to be equivalent to the graph $\G$ being connected by paths.
Before proving similar results for infinite graphs, 
we prove the easy result that pathwise connectedness is equivalent to the topological one.
The result is probably known, but we were not able to find a reference in the literature.

\begin{prop}\label{connected}
The following assertions are equivalent.
\begin{enumerate}[a)]
\item The graph $\G$ is pathwise connected: for every two nodes $\mathsf{v}_1,\mathsf{v}_2 \in \V$ there exists a finite path connecting $\mathsf{v}_1$ and $\mathsf{v}_2$.
\item The graph $\G$ is topologically connected, i.e., if $\emptyset \neq \V_1, \V_2 \subset \V$ are sets such that
$$
\V_1 \cap \V_2 = \emptyset, \quad \mbox{and} \quad  \V_1 \cup \V_2 =\V,
$$
then there exists $ \mathsf{e} \in \E$ such that $\mathsf{e} \sim \V_1, \mathsf{e} \sim \V_2$.
\end{enumerate}
\end{prop}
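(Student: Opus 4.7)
The plan is to prove the two implications separately using the standard componentwise argument, taking care to interpret $\me \sim \V_i$ as ``$\me$ is incident to some node of $\V_i$.''

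For the direction (1) $\Rightarrow$ (2), I would argue by contradiction. Suppose $\V = \V_1 \sqcup \V_2$ is a disjoint decomposition into two nonempty subsets such that no edge has endpoints in both. Pick $\mv_1 \in \V_1$ and $\mv_2 \in \V_2$; by pathwise connectedness there is a finite sequence of edges $\me_1,\ldots,\me_N$ forming a path from $\mv_1$ to $\mv_2$. Let $\mathsf w_0=\mv_1,\mathsf w_1,\ldots,\mathsf w_N=\mv_2$ be the successive vertices visited. Since $\mathsf w_0 \in \V_1$ and $\mathsf w_N \in \V_2$, there is a smallest index $k$ with $\mathsf w_k \in \V_2$; then $\me_k$ has one endpoint in $\V_1$ (namely $\mathsf w_{k-1}$) and the other in $\V_2$, giving the required edge and contradicting the hypothesis.

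For (2) $\Rightarrow$ (1), I would fix once and for all a node $\mv_0 \in \V$ and let $\V_1$ be the set of vertices that are joined to $\mv_0$ by some finite path, with $\V_2 := \V \setminus \V_1$. The set $\V_1$ is nonempty because $\mv_0 \in \V_1$ via the empty path. I want to show $\V_2 = \emptyset$. Suppose not; then $\V = \V_1 \sqcup \V_2$ is a nontrivial partition, so by topological connectedness there is an edge $\me \in \E$ incident to some $\mathsf u \in \V_1$ and to some $\mathsf w \in \V_2$. Concatenating a path from $\mv_0$ to $\mathsf u$ with the edge $\me$ yields a finite path from $\mv_0$ to $\mathsf w$, putting $\mathsf w$ in $\V_1$, a contradiction. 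Hence $\V_2=\emptyset$, and since any two vertices are connected to $\mv_0$, any two vertices are connected to each other.

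The main obstacle is essentially bookkeeping rather than anything conceptual: one must make sure the definition of the equivalence class $\V_1$ really yields a nontrivial partition when $\V_2$ is nonempty, and that the incidence convention in assertion~(2) matches the notion of ``edge between two components'' used in the contradiction. Both checks are routine once the definitions are nailed down, so no substantial obstacle is expected.
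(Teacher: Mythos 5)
Your proof is correct, and in one direction it is actually tighter than the paper's. For $(1)\Rightarrow(2)$ both arguments are the same: walk along a finite path from a vertex of $\V_1$ to a vertex of $\V_2$ and locate the first crossing edge (the paper takes the maximal index of an edge still on the $\V_1$ side, you take the minimal index of a vertex already in $\V_2$; these are the same observation). For $(2)\Rightarrow(1)$ the paper defines $\V_1$ and $\V_2$ as the path-components of the two given nodes $\mv_1$, $\mv_2$ and asserts that either $\V_1=\V_2$ or $\V_1\cap\V_2=\emptyset$ with $\V_1\cup\V_2=\V$; but the covering condition $\V_1\cup\V_2=\V$ need not hold (the graph could a priori have further components), so hypothesis $(2)$ cannot be invoked as written. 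Your choice of partition --- the component of a fixed base point $\mv_0$ versus its set-theoretic complement --- always yields a genuine two-block partition of $\V$ whenever the complement is nonempty, so the hypothesis applies directly and the contradiction goes through cleanly. This is the standard repair of the argument and buys you a proof with no case distinction and no unstated covering assumption.
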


\begin{proof}
We assume that \emph{a)} holds and fix a decomposition $\V=\V_1 \cup \V_2$. 
Since the graph is pathwise connected, for every $\mathsf{v}_1 \in \V_1, \mathsf{v}_2 \in \V_2$ there exists a path $P=[\mathsf{e}_1,\ldots,\mathsf{e}_\ell]$ of finite length $\ell$ connecting $\mathsf{v}_1$ to $\mathsf{v}_2$. 
The index
$$
i_0:=\max_{i=1,\ldots,\ell} \{i :\mathsf{e}_i \in \E_1\}
$$
an edge $\mathsf{e}_{i_0}$ that is adjacent to to both $\E_1$ and $\E_2$.

Conversely, assume that \emph{b)} holds. Fix two nodes $\mathsf{v}_1,\mathsf{v}_2 \in \V$ and define
$$
\V_1:= \bigcup_{n=1}^\infty \{\mathsf{v}' \in \V: d(\mathsf{v}_1,\mathsf{v}')=n\},\qquad \V_2:= \bigcup_{n=1}^\infty \{\mathsf{v}' \in \V: d(\mathsf{v}_2,\mathsf{v}')=n\}.
$$
If $\V_1 =\V_2$ there is nothing to prove.
If $\V_1 \cap \V_2=\emptyset$ then there exists by assumption an edge connecting both vertex sets.
This concludes the proof.
\end{proof}

In order to characterize irreducibility, we define the \emph{finite span} $\Sfin(\mathsf{e}) \subset \E$ 
of the edge $\mathsf{e}$ as the subgraph induced by the set of edges
$$
\mathsf{E}_{\mathrm{fin}}:=\{\mathsf{e}' \in \E: \mbox{there is a path from $\mathsf{e}$ to $\mathsf{e}'$ containing no infinite stars}\}.
$$
We say that the paths that have the above property have \emph{finite weight}.
Paths with \emph{infinite weight} are defined analogously.
We are now in the position of stating the main theorem of this Section.
\begin{thm}\label{infiniteirreducibility}
For a countable graph $\mathsf G$ the following assertions are equivalent.
\begin{enumerate}[a)]
\item The semigroup $(e^{t\Delta})_{t \geq 0}$ is irreducible. 
\item $\Sfin(\mathsf{e})= \G$ for one $\mathsf{e} \in \E$.
\item $\Sfin(\mathsf{e})= \G$ for all $\mathsf{e} \in \E$.
\end{enumerate}
\end{thm}
\begin{cor}
If $\G$ is a connected, locally finite network, then $(e^{t\Delta})_{t\geq 0}$ is irreducible.
\end{cor}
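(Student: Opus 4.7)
The corollary is an immediate application of Theorem~\ref{infiniteirreducibility} together with Proposition~\ref{connected}, so the plan is essentially a one-line verification of condition (b) or (c) of the theorem. My plan is as follows.

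First, I would invoke Proposition~\ref{connected}: since $\G$ is connected, topological connectedness coincides with pathwise connectedness, so for every pair of edges $\me,\me' \in \E$ there exists a finite path $[\me_1,\ldots,\me_\ell]$ in $\G$ with $\me_1 = \me$ and $\me_\ell = \me'$.

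Next, I would observe that because $\G$ is locally finite, every node $\mv \in \V$ satisfies $\deg(\mv) < \infty$, hence no star in $\G$ is infinite. In particular the path produced in the previous step vacuously contains no infinite star, so it is a path of finite weight in the sense defined just before Theorem~\ref{infiniteirreducibility}. Consequently $\me' \in \Sfin(\me)$ for every $\me' \in \E$, which gives $\Sfin(\me) = \G$ for every $\me \in \E$.

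Finally, I would apply the implication (c)$\Rightarrow$(a) of Theorem~\ref{infiniteirreducibility} to conclude that $(e^{t\Delta})_{t\geq 0}$ is irreducible. There is no real obstacle here; the only point worth stating carefully is the dual use of ``connected,'' handled by Proposition~\ref{connected}, and the trivial observation that local finiteness rules out infinite stars, so that the notion of ``finite weight'' becomes vacuous.
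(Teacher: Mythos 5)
Your proposal is correct and matches the paper's (implicit) reasoning: the corollary is stated as an immediate consequence of Theorem~\ref{infiniteirreducibility}, and your verification that local finiteness makes every path have finite weight, so that $\Sfin(\me)=\G$ for every edge by connectedness, is exactly the intended argument.
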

\begin{cor}\label{cor:numbersefin}
The number of non trivial, minimal invariant ideals of $L^2(\mathsf G)$ is the number of the different $\Sfin(\mathsf{e})$ contained in the graph.
\end{cor}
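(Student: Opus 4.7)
The plan is to establish a bijection between equivalence classes for the relation ``$\me \sim \me'$ iff there is a finite-weight path from $\me$ to $\me'$'' and the minimal invariant ideals of $L^2(\G)$. First I would observe that this relation is an equivalence on $\E$, because concatenating two finite-weight paths at a common edge again yields a finite-weight path; the equivalence classes are precisely the distinct sets $\Sfin(\me)$, and they partition $\E$.

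Next, I would show that for each class $\G_i := \Sfin(\me_i)$, the ideal $L^2(\G_i)$ is invariant. The crucial geometric claim is that every node $\mv$ belonging to the boundary of $\G_i$ has infinite degree: if not, any edge $\me_2 \notin \G_i$ incident to $\mv$ could be reached from $\me_i$ by appending the finite-weight segment $\me_1, \me_2$ (through the finite-degree node $\mv$) to a finite-weight path from $\me_i$ to $\me_1 \in \G_i$, contradicting $\me_2 \notin \Sfin(\me_i)$. Invariance of $L^2(\G_i)$ then follows from Proposition~\ref{infiniteinvariance} (applied to the node set $\V(\G_i)$, once one notes that the induced subgraph agrees with $\G_i$ on its interior).

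To prove that each $L^2(\G_i)$ is minimal, I would apply Theorem~\ref{infiniteirreducibility} inside the subnetwork $\G_i$, whose heat semigroup coincides with the restriction of $\heat$ to the invariant ideal $L^2(\G_i)$ (same Dirichlet form restricted to functions supported in $\G_i$). Minimality reduces to showing $\Sfin^{\G_i}(\me) = \G_i$ for every $\me \in \G_i$. This holds because any finite-weight path in $\G$ between two edges of $\G_i$ must stay inside $\G_i$ (it avoids infinite-degree nodes, hence avoids the boundary of $\G_i$), and its intermediate nodes are interior to $\G_i$, so their degrees computed in $\G_i$ agree with those in $\G$ and remain finite.

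For the converse, let $L^2(\omega)$ be a minimal invariant ideal. An invariance argument (using that the heat flow on a single edge admits no non-trivial invariant subideals) forces $\omega$ to be a union of entire edges, and a repetition of the boundary analysis above shows that boundary nodes of $\omega$ must again be of infinite degree. Picking $\me \in \omega$ and tracing any finite-weight path from $\me$ edge by edge, the finite-degree shared nodes cannot lie on the boundary of $\omega$, so the whole path stays inside $\omega$; thus $\Sfin(\me) \subseteq \omega$. Since $L^2(\Sfin(\me))$ is itself a non-trivial invariant subideal of $L^2(\omega)$, minimality gives $\omega = \Sfin(\me)$, completing the bijection. The main obstacle I foresee is the careful bookkeeping needed in this last paragraph, namely showing that invariant ideals are necessarily of the form $L^2(\omega)$ with $\omega$ a union of edges, rather than an arbitrary measurable subset of $\G$, and consistently identifying the boundary condition at infinite-degree nodes after restriction to the subnetwork.
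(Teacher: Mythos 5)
Your argument is correct and is essentially the paper's own route: it reproduces the decomposition of $\G$ into the distinct finite spans (Proposition~\ref{decompositiongraph}), the invariance of each $L^2(\Sfin(\me))$ via the observation that boundary nodes of a finite span have infinite degree (Lemma~\ref{hilflemma2} together with Proposition~\ref{infiniteinvariance}), and the converse identification of invariant ideals as unions of finite spans (Lemma~\ref{hilflemma1} and the boundary analysis in the proof of Theorem~\ref{infiniteirreducibility}). One small imprecision: the restriction of $(e^{t\Delta})_{t\geq 0}$ to $L^2(\Sfin(\me))$ need \emph{not} coincide with the intrinsic heat semigroup of the standalone subnetwork $\Sfin(\me)$, since a boundary node may have infinite degree in $\G$ but finite degree inside $\Sfin(\me)$ (e.g.\ a pendant edge of an infinite star), so the standalone semigroup would drop the Dirichlet condition there; this does not harm your conclusion, because the minimality argument you actually run --- finite-weight paths between edges of $\Sfin(\me)$ avoid its boundary and stay inside --- rules out proper invariant subideals directly, without identifying the two semigroups.
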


We split the proof of Theorem~\ref{infiniteirreducibility} in several steps.
The idea is to prove that the invariant ideals of the semigroup $(e^{t\Delta})_{t\geq 0}$ are of the form 
$\cup_i \Sfin(\mathsf{e}_i)$ for some family $\{\mathsf{e}_i\} \subset \E$.
As a preliminary remark, we observe that the only possible invariant ideals are of the form $L^2(\G')$, 
where $\G'$ is some subgraph of $\G$ induced by a subset of the node set.
To see this, recall that all ideals of $L^2(\G)$ have the form $L^2(\omega)$, 
where $\omega=\bigoplus_{\mathsf{e} \in \E} \omega_{\mathsf{e}} \subset \bigoplus_{\mathsf{e} \in \E}[0,1]$. 
Thus, we are claiming that if $L^2(\omega)$ is invariant, then
$|\omega_{\mathsf{e}}|\in\{0,1\}$, 
but this is a consequence of the irreducibility of the heat semigroup on $L^2[0,1]$.
We now show that ideals of the form $\Sfin(\mathsf{e})$ are invariant.

\begin{lemma}\label{hilflemma2}
Consider a connected graph $\G$ and $\mathsf{e} \in \E$. Then $\Sfin(\mathsf{e})$ is invariant under the action of $(e^{t\Delta})_{t\geq 0}$.
\end{lemma}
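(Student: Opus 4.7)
The plan is to apply Ouhabaz's Theorem~\ref{ortho} directly with the closed subspace $Y := L^2(\Sfin(\me))$. Since any $f \in V \cap Y$ and $g \in V \cap Y^\perp$ have disjoint supports (one concentrated on edges of $\Sfin(\me)$, the other on edges of $\E \setminus \Sfin(\me)$), the integral condition $a(f,g)=0$ is automatic. Therefore the whole task reduces to showing that the orthogonal projection $P_Y$, which zeroes out the components of $\psi$ on edges outside $\Sfin(\me)$ and leaves the rest unchanged, maps $V$ into $V$.

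To verify $P_Y V \subset V$, I would walk through the three types of nodes, as in the proof of Proposition~\ref{infiniteinvariance}. At a node $\mv$ lying in the interior of $\Sfin(\me)$ (only incident to edges of $\Sfin(\me)$), $P_Y\psi$ agrees with $\psi$ in a neighbourhood and continuity is preserved. At a node lying entirely outside $\Sfin(\me)$, $P_Y\psi \equiv 0$ in a neighbourhood and continuity is trivial. The remaining case is a boundary node $\mv$, meaning $\mv$ is an endpoint of some edge of $\Sfin(\me)$ and also an endpoint of some edge $\me''\notin\Sfin(\me)$. At such $\mv$ continuity of $P_Y\psi$ forces $\psi(\mv)=0$, and by Lemma~\ref{hilflemma3} this automatically holds provided $\deg(\mv)=\infty$.

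The key step, then, and the one I expect to be the only nontrivial one, is to prove that every boundary node of $\Sfin(\me)$ has infinite degree. I would argue by contradiction: assume $\mv$ is a boundary node with $\deg(\mv)<\infty$. Pick an edge $\me' \in \Sfin(\me)$ incident to $\mv$ (which exists because $\mv$ belongs to some edge of $\Sfin(\me)$) and a finite-weight path $\me = \me_0,\me_1,\ldots,\me_k = \me'$ witnessing $\me' \in \Sfin(\me)$. Now extend this path by the forbidden edge $\me''$: the only new intermediate vertex is $\mv$ itself, which by assumption has finite degree. Hence the extended path still contains no infinite star, which forces $\me'' \in \Sfin(\me)$, a contradiction.

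Once boundary nodes are shown to have infinite degree, continuity of $P_Y\psi$ is settled, and to complete the verification that $P_Y\psi \in V$ I would produce its boundary trace by setting $d^{P_Y\psi}_{\mv} := d^\psi_{\mv}$ for $\mv \in \V(\Sfin(\me))$ and $d^{P_Y\psi}_{\mv}:=0$ otherwise; this still lies in $\ell^2(\V)$ and satisfies the incidence relations defining $V$, since the components it kills correspond to boundary vertices on which $d^\psi$ already vanishes by~\eqref{eq:partialV}. The main obstacle is the path-extension argument above, which relies on parsing the definition of ``finite-weight path'' correctly (no intermediate node of infinite degree) so that appending one additional edge across a finite-degree vertex preserves the finite-weight property.
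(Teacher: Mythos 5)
Your proposal is correct and follows essentially the same route as the paper: apply Ouhabaz's Theorem with $Y=L^2(\Sfin(\me))$, note that $a(f,g)=0$ is automatic by disjoint supports, and reduce $P_YV\subset V$ to the claim that every boundary node of $\Sfin(\me)$ has infinite degree, proved by the same contradiction (a finite-degree boundary node would absorb all its incident edges into $\Sfin(\me)$ and hence be internal). Your path-extension argument and the explicit construction of $d^{P_Y\psi}$ merely spell out details the paper leaves implicit.
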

\begin{proof}
We use Theorem \ref{ortho}. To prove that both conditions hold, 
we arbitrarily choose $\mathsf{e} \in \E$ and denote by $P$ the projection onto $\Sfin(\mathsf{e})$. 
Observe that $P\psi(x)=\mathsf{1}_{\Sfin(\mathsf{e})}(x) \psi(x)$ for all $x \in \G$.
So, the first condition of Theorem \ref{ortho} holds since $P\psi$ and $(I-P)\psi$ have disjoint support. 

\smallskip
We prove that $P V\subset V$. 
Recall that the boundary of $\Sfin(\mathsf{e})$ consists of those nodes that are adjacent to $\Sfin(\mathsf{e})$ and to its complement.
So, one only has to prove continuity in the nodes $\mathsf{v}\in \partial \Sfin(\mathsf{e})$, and indeed
it suffices to show that all the nodes on the boundary of $\Sfin(\mathsf{e})$ have infinite degree. 
But this is clear as for, if $\mathsf{v}$ is on the boundary of $\Sfin(\mathsf{e})$ and has finite degree, all edges incident onto $\mathsf{v}$ are in $\Sfin(\mathsf{e})$ by definition, 
and so $\mathsf{v}$ is internal to $\Sfin(\mathsf{e})$.
\end{proof}

The next step is to identify the subgraphs of the form $\Sfin(\mathsf{e})$.

\begin{lemma}\label{hilflemma1}
Consider a connected graph $\G$ and a connected subgraph $\G'$. Consider the following assertions.
\begin{enumerate}
\item $\deg(\mathsf{v})=\infty$ for all nodes in $\partial \G'$.
\item There exists a path with finite weight between every $\mathsf{e},\mathsf{e}' \in \G'$.
\item The subgraph $\G'$ is the finite span $\Sfin(\mathsf{e}')$ of each of its edges.
\end{enumerate}
Then
\begin{enumerate}[a)]
\item $[1. \wedge 2.] \Leftrightarrow [3.]$ and
\item $[1.] \Rightarrow [\forall \mathsf{e} \in \G':\Sfin(\mathsf{e} ) \subset \G' ]$.
\end{enumerate}
\end{lemma}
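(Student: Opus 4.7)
The plan is to prove (b) first, because it is the analytic core: once we know that $(1)$ forces $\Sfin(\me)$ to stay inside $\G'$, the equivalence in (a) becomes essentially a bookkeeping exercise using the definition of $\Sfin$.

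For (b), I would assume (1) and argue by contradiction. Pick $\me\in\G'$ and suppose some $\me'\in\Sfin(\me)\setminus\G'$. By definition of $\Sfin(\me)$ there is a path of edges $\me=\me_0,\me_1,\dots,\me_n=\me'$ in which every internal node (the vertex shared by consecutive edges $\me_i,\me_{i+1}$) has finite degree. Since $\me_0$ lies in $\G'$ and $\me_n$ lies in $\G\setminus\G'$, there is a smallest index $i_0$ with $\me_{i_0}\in\G'$ and $\me_{i_0+1}\notin\G'$; the node shared by $\me_{i_0}$ and $\me_{i_0+1}$ is then adjacent both to $\G'$ and to $\G\setminus\G'$, hence lies in $\partial\G'$. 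By (1) it has infinite degree, contradicting that the path has finite weight.

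For (a), I would split into the two implications. The direction $(1)\wedge(2)\Rightarrow(3)$ is immediate from (b): fix any $\me'\in\G'$; (b) gives $\Sfin(\me')\subset\G'$, while (2) says that every edge of $\G'$ is linked to $\me'$ by a finite-weight path and therefore lies in $\Sfin(\me')$, whence $\G'=\Sfin(\me')$. Conversely, assuming (3), (2) is immediate: for $\me,\me'\in\G'=\Sfin(\me)$ the very definition of $\Sfin(\me)$ provides a finite-weight path from $\me$ to $\me'$. To recover (1), pick $\mv\in\partial\G'$ and, arguing by contradiction, suppose $\deg(\mv)<\infty$. Since $\mv$ is on the boundary there exist $\me_1\in\G'$ and $\me_2\in\G\setminus\G'$ both incident to $\mv$; the two-edge path $[\me_1,\me_2]$ has $\mv$ as its only shared (internal) node, so it has finite weight. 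Thus $\me_2\in\Sfin(\me_1)=\G'$, contradicting $\me_2\notin\G'$.

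The only genuinely delicate point I foresee is making sure the notion of a ``path with finite weight'' is applied consistently: the argument for (b) requires that when a finite-weight path leaves $\G'$, the crossing vertex counts as a node of the path (so that (1) applies to it), and the argument for $(3)\Rightarrow(1)$ requires that a two-edge path through a single finite-degree vertex does count as finite-weight. Both readings are the natural one from the definition of $\Sfin$ just given, so there is no real difficulty, only a need to state the interpretation explicitly at the start of the proof.
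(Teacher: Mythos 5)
Your proof is correct and uses essentially the same ideas as the paper's: a finite-weight path leaving $\G'$ must pass through a boundary node, which by (1) has infinite degree, and a finite-degree boundary node would force all its incident edges into the same finite span, making it internal. The only difference is organizational --- you prove (b) first and deduce the inclusion $\Sfin(\me')\subset\G'$ in (a) from it, which is a cleaner arrangement than the paper's but not a different argument.
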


\begin{proof}

To prove the first direction of $a)$ we fix a subgraph with the required properties.
We first observe that \emph{1.}\ implies $\G' \subset \Sfin(\mathsf{e}) $ for all
$\mathsf{e} \in \G'$.
Assume now that $\exists \mathsf{e}' \in \Sfin(\mathsf{e})\setminus \G'$. 
Without loss of generality, let $\mathsf{e}' \sim \G'$, i.e., $\mathsf{e}'\sim \mathsf{v}, \mathsf{v} \in \G'$ and assume that the boundary $\partial \G'$ only consists of $\mathsf{v}$. 
By hypothesis $\deg(\mathsf{v})= \infty$ and so there is no path with finite weight between $\mathsf{e}$ and $\mathsf{e}'$, which is a contradiction.

Conversely, if $\mathsf G'$ is the finite span of each of its edges then 
\emph{2.}\ is trivially true 
and \emph{1.}\ follows from the fact that $\mathsf G'$ is a finite span.
For if one node on the boundary would have finite degree, then all adjacent nodes would belong to the same finite span and hence to $\mathsf G'$. But this means that the node is internal to $\mathsf G'$, hence it does not belong to the boundary.

\emph{b)} We arbitrarily choose $\mathsf{e} \in \G'$, $\mathsf{e}' \not\in \G'$, and a path $P$ between $\mathsf{e}$ and $\mathsf{e}'$. 
By definition of $\partial \G'$, there exists $\mathsf{v} \in P \cap \partial \G'$. 
As a consequence $P$ has infinite weight and the proof is complete.
\end{proof}
The following is a straightforward consequence of the above lemma.
\begin{prop}\label{decompositiongraph}
Consider a connected graph $\G$. Then there exists $\E' \subset \E$ such that
$$
\bigcup_{\mathsf{e} \in \E'} \Sfin(\mathsf{e})= \G,
$$
and
$$ 
\partial \Sfin(\mathsf{e}) = \Sfin(\mathsf{e})\cap \Sfin(\mathsf{e}')=\partial \Sfin(\mathsf{e}'), \qquad \mathsf{e},\mathsf{e}' \in \E'.
$$
\end{prop}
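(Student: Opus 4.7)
The plan is to exhibit $\E'$ as a complete set of representatives for the equivalence relation on edges that is implicit in the definition of finite span, and then read off the two claims from Lemma~\ref{hilflemma1}.

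First I would define $\me \sim \me'$ on $\E$ to mean that $\me = \me'$ or there exists a finite-weight path (i.e.\ one containing no infinite stars) between $\me$ and $\me'$. Reflexivity is by definition; symmetry is clear because a path can be traversed backwards without changing its weight; and transitivity follows because the concatenation of two finite-weight paths is again a path containing no infinite stars. By the definition of $\Sfin(\me)$, the equivalence class of $\me$ is precisely the edge set of $\Sfin(\me)$, and by Lemma~\ref{hilflemma1}\,a) each such class is the finite span of any of its edges, so the subgraphs $\Sfin(\me)$ partition the edges of $\G$. Pick one edge from every class to form $\E'$.

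For the covering identity $\bigcup_{\me \in \E'}\Sfin(\me)=\G$, every edge of $\G$ lies in the equivalence class of some $\me \in \E'$ and is therefore an edge of $\Sfin(\me)$, and since $\G$ is connected every vertex is incident to some edge and hence lies in one of the $\Sfin(\me)$ as well.

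For the boundary identity, fix two distinct $\me,\me' \in \E'$. On the level of edges, $\Sfin(\me)$ and $\Sfin(\me')$ are disjoint (different equivalence classes), so any $\mv \in \Sfin(\me)\cap\Sfin(\me')$ is a vertex that is incident to an edge of $\Sfin(\me)$ and to an edge of $\Sfin(\me')\subset \G\setminus \Sfin(\me)$; hence $\mv \in \partial\Sfin(\me)$, and by symmetry $\mv\in\partial\Sfin(\me')$, giving $\Sfin(\me)\cap\Sfin(\me')\subset \partial\Sfin(\me)\cap\partial\Sfin(\me')$. For the reverse inclusion, take $\mv\in\partial\Sfin(\me)$: then $\mv$ is incident to at least one edge $\me''\notin\Sfin(\me)$, and $\me''$ belongs to the class of some $\me'\in\E'$ with $\me'\neq\me$, so $\mv \in \Sfin(\me)\cap\Sfin(\me')$. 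Taking the union over all $\me'\in\E'\setminus\{\me\}$ recovers $\partial\Sfin(\me)$, which is the content of the displayed chain of equalities.

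The main obstacle is purely notational rather than mathematical: the displayed equation $\partial\Sfin(\me)=\Sfin(\me)\cap\Sfin(\me')=\partial\Sfin(\me')$ must be read as "for each boundary vertex of $\Sfin(\me)$ there is a choice of $\me'\in\E'$ making the identities valid", since not every pair of spans in the decomposition need share a boundary vertex. Once one fixes this interpretation, the argument above is the straightforward consequence of Lemma~\ref{hilflemma1} announced by the author, because the key structural input, namely that boundary vertices of any $\Sfin(\me)$ have infinite degree and are therefore exactly the gluing points between different finite spans, has already been established there.
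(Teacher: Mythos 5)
Your proof is correct and takes the route the paper intends: the paper in fact prints no proof at all, merely asserting that the proposition is ``a straightforward consequence'' of Lemma~\ref{hilflemma1}, and your argument via the equivalence relation ``joined by a finite-weight path'' together with the observation that boundary nodes of a finite span have infinite degree is precisely the argument being alluded to. Your closing caveat is also well taken: as displayed, $\partial\Sfin(\me)=\Sfin(\me)\cap\Sfin(\me')=\partial\Sfin(\me')$ for \emph{all} pairs $\me,\me'\in\E'$ is literally false whenever three or more spans form a chain, and the correct statement is the one you prove, namely $\partial\Sfin(\me)=\bigcup_{\me'\in\E'\setminus\{\me\}}\bigl(\Sfin(\me)\cap\Sfin(\me')\bigr)$ with each intersection contained in both boundaries.
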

Now we can characterise irreducibility.
\begin{proof}[Proof of Theorem~\ref{infiniteirreducibility}]
Observe that, as a consequence of Lemma \ref{hilflemma3}, $\psi(\mathsf{v})=0$ for all $\psi \in \V$ if and only if $\deg(\mathsf{v})=\infty$.
Also recall that irreducibility is equivalent to the fact that
the only invariant ideal is $L^2(\mathsf G)$.
In order to see that b) and c) are equivalent observe that if $\mathsf{e}' \in \Sfin(\mathsf{e})$, then $\Sfin(\mathsf{e})=\Sfin(\mathsf{e}')$.

Assume that a) holds. Then the only invariant ideal is $L^2(\mathsf G')$. 
Since by Lemma~\ref{hilflemma2} $L^2(\Sfin(\mathsf{e}))$ is invariant for all $\mathsf{e} \in \E$, c) follows.
Conversely, assume that c) holds and that $L^2(\mathsf G')$ is invariant. We observe that 
the projection $P\psi$ of a function $\psi$ on $L^2(\G')$ vanishes in all points of $\G\setminus\G'$ and so, 
by continuity it vanishes in all points of the boundary of $\G'$.
Since on $L^2(\mathsf G')$ $P$ coincides with the identity,
we deduce that each function $\psi \in V$ also has to vanish on all points of the boundary of $\G'$
and so we conclude by Lemma \ref{hilflemma3}, that all those points $\mathsf{v}'$ satisfy $\deg(\mathsf{v}')=\infty.$
So, for all $\mathsf{e} \in \mathsf G'$, $\Sfin(\mathsf{e}) \subset \mathsf G'$, hence $\mathsf G' = \mathsf G$
and the proof is complete.
\end{proof}
This theorem helps to establish a relation to the well-known result connecting the eigenvalues
of the Laplace matrix of a graph to the number of connected components.
We start by a definition.
\begin{defi}
A graph is \emph{$\Delta$-connected}, 
if the heat equation on the corresponding network is irreducible in $L^2(\mathsf G)$.
The number of $\Delta$-connected components is the number of non trivial, 
minimal invariant ideals of the corresponding heat equation.
\end{defi}
For finite graphs $\Delta$-connectedness and topological connectedness, as well as number 
of invariant ideals of the Laplacian and multiplicity of the eigenvalue 0 coincide
and this is reflected in a well-known elementary theorem from basic graph theory.
\begin{thm}[Connected components and multiplicity of $\lambda_0$]\label{theo:connected}
For a finite graph, the number of connected components of a graph $G$ is the multiplicity of the eigenvalue 0 of the Laplace matrix of the graph.
\end{thm}
For the $\ell^2$-Laplacian matrix on non finite networks,
the value 0 does not need to be an eigenvalue since the constant vector $\mathsf 1$
is not part of $\ell^2(\mathsf V)$, see~\cite{Bel09} for a detailed discussion
of spectral properties of the adjacency matrix.

However, Corollary~\ref{cor:numbersefin} shows that the theorem carries over to the new situation if topological connectedness is replaced by $\Delta$-connectedness and the multiplicity of the eigenvalues is replaced by the number of invariant ideals. So, we reformulate Corollary~\ref{cor:numbersefin} in the following form.
\begin{thm}[Connected components and invariant ideals]\label{theo:connectedagain}
For a countable graph,
the number of $\Delta$-connected components of a graph $\mathsf G$
equals the number of maximal invariant ideals of heat equation on the corresponding network.
\end{thm}

\section{Acknowledgements}
A major part of this work was accomplished as the author was
in the ``Graduate School for Mathematical Analysis of Evolution, Information and Complexity'' of the University of Ulm.
Delio Mugnolo and Robin Nittka are gratefully acknowledged 
for discussion and valuable help.

\bibliographystyle{unsrt} 
\bibliography{/home/cardanobile/Dropbox/literatur}
\end{document}